\numberwithin{equation}{section}
\newtheorem{thm}{Theorem}[section]
\newtheorem{lem}[thm]{Lemma}
\newtheorem{prop}[thm]{Proposition}
\theoremstyle{definition}
\newtheorem{rem}[thm]{Remark}
\newcommand\Step[1]{\par\medskip\noindent {\sc Step~#1.}\quad}
\newcommand\R{{\mathbb R}}
\newcommand\C{{\mathbb C}}
\newcommand\Tma{T_{\mathrm{max}}}
\newcommand\Eqdef{\stackrel{\text{\tiny def}}{=}}
\newcommand\Loc{{\mathrm{loc}}}
\newcommand\goto{\mathop{\longrightarrow}}
\newcommand\MScN[1]{\href{http://www.ams.org/mathscinet-getitem?mr=#1}{\nolinkurl{(#1)}}}
\newcommand\DOI[1]{\href{http://dx.doi.org/#1}{(doi: \nolinkurl{#1})}}
\newcommand\LINK[1]{\href{#1}{(link: \nolinkurl{#1})}}
\newcommand\DIb{v_0 }
\begin{document}

\title{Finite-time blowup for a Schr\"o\-din\-ger equation with nonlinear source term}

\def\shorttitle{Finite-time blowup}

\author[T. Cazenave]{Thierry Cazenave$^1$}
\email{\href{mailto:thierry.cazenave@sorbonne-universite.fr}{thierry.cazenave@sorbonne-universite.fr}}

\author[Y. Martel]{Yvan Martel$^{2}$}
\email{\href{mailto:yvan.martel@polytechnique.edu}{yvan.martel@polytechnique.edu}}

\author[L. Zhao]{Lifeng Zhao$^3$}
\email{\href{mailto:zhaolf@ustc.edu.cn}{zhaolf@ustc.edu.cn}}

\address{$^1$Sorbonne Universit\'e \& CNRS, Laboratoire Jacques-Louis Lions,
B.C. 187, 4 place Jussieu, 75252 Paris Cedex 05, France}

\address{$^2$CMLS, \'Ecole Polytechnique, CNRS, 91128 Palaiseau Cedex, France}

\address{$^3$Wu Wen-Tsun Key Laboratory of Mathematics and School of Mathematical Sciences, University of Science and Technology of China, Hefei 230026, Anhui, China}

\subjclass[2010]{Primary 35Q55; Secondary 35B44, 35B40}

\keywords{Nonlinear Schr\"o\-din\-ger equation, finite-time blowup, blow-up profile}

\thanks{The third author thanks the hospitality of Professor Frank Merle when he visited IHES and Professor Yvan Martel when he visited CMLS, \'Ecole Polytechnique, where part of the work was done.}

\begin{abstract}
We consider the nonlinear Schr\"o\-din\-ger equation
\[ 
u_t = i \Delta u + | u |^\alpha u \quad \mbox{on $\R^N $, $\alpha>0$,}
\]
for $H^1$-subcritical or critical nonlinearities: $(N-2) \alpha \le 4$.
Under the additional technical assumptions $\alpha\geq 2$ (and thus $N\leq 4$), we construct
$H^1$ solutions that blow up in  finite time with explicit blow-up profiles and blow-up rates. In particular, blowup can occur at any given finite set of points of $\R^N$.

The construction involves explicit functions $U$, solutions of the ordinary differential equation $U_t=|U|^\alpha U$. In the simplest case, $U(t,x)=(|x|^k-\alpha t)^{-\frac 1\alpha}$ for $t<0$, $x\in \R^N$. For $k$ sufficiently large, $U$ satisfies $|\Delta U|\ll U_t$ close to the blow-up point $(t,x)=(0,0)$, so that it is a suitable approximate solution of the problem. To construct an actual solution $u$ close to $U$, we use energy estimates and a compactness argument.
\end{abstract}

\maketitle


\section{Introduction}

We consider the nonlinear Schr\"o\-din\-ger equation
\begin{equation} \label{NLS1} 
u_t = i \Delta u + | u |^\alpha u
\end{equation} 
on $\R^N $, where $\alpha >0$ satisfies
\begin{equation} \label{fDFG0} 
(N-2) \alpha \le 4. 
\end{equation}
Under assumption~\eqref{fDFG0}, equation~\eqref{NLS1} is $H^1$-subcritical or critical, so that the corresponding Cauchy problem is locally well posed in $H^1 (\R^N )$ -- see e.g. \cite{CLN10}.

Equation~\eqref{NLS1} is a member of the more general family of complex Ginzburg-Landau equations 
\[u_t = \zeta \Delta u + \xi | u |^\alpha u \quad \mbox{ where $\zeta , \xi \in \C$ and $ \Re \zeta \ge 0 $.}\] 
It is proved in~\cite[Theorem~1.1]{CazenaveCDW-Fuj} that (for $ \alpha <2/N$) equation~\eqref{NLS1} has no global in time $H^1$ solution that remains bounded in $H^1$. In other words, every $H^1$ solution blows up, in finite or infinite time.
The question of finite-time blowup is left open in~\cite{CazenaveCDW-Fuj}. 
It seems that no standard argument based on obstruction to global existence (Levine's method, variance argument) is applicable. 

The purpose of this article is to construct solutions of~\eqref{NLS1} that blow up in finite time. 
For technical reasons we require
\begin{equation} \label{fCT1} 
\alpha \ge 2 .
\end{equation} 
(The condition $\alpha \ge 1$ is used in the proof of estimates~\eqref{fPE1}-\eqref{fPE2}, the stronger condition $\alpha \ge 2$ is used in formula~\eqref{fGCA9}.)
Conditions~\eqref{fDFG0}-\eqref{fCT1} impose $N \le 4$. 
More precisely, the allowed range of powers is
\begin{equation*} 
\begin{cases} 
\alpha \in [2, \infty )\quad & N=1, 2 \\
\alpha \in [2, 4] \quad & N=3 \\
 \alpha =2\quad & N=4
\end{cases} 
\end{equation*} 

Our first blow-up result, related to single point blowup with a simple asymptotic profile is the following. 

\begin{thm} \label{eThm2} 
Let $N\ge 1$ and let $\alpha >0$ satisfy~\eqref{fDFG0} and~\eqref{fCT1}. Given $A>0$ and $k> \max\{10, \frac {N\alpha } {2} \}$, let $U$ be defined by
\begin{equation} \label{fCT8:2}
U(t, x) = (-t) ^{- \frac {1} \alpha } f ( (-t)^{ - \frac {1} {k}} x)
\quad t<0, \, x\in \R^N ,
\end{equation} 
where
\begin{equation} \label{fCT8:3}
 f (x)= ( \alpha + A |x|^k)^{- \frac {1} {\alpha }} .
\end{equation} 
It follows that there exist a solution $u\in C((-\infty , 0), H^1 (\R^N ) ) $ of~\eqref{NLS1} and $ \mu >0$ such that
\begin{equation} \label{fEEN26:2} 
 \| u(t) - U(t) \| _{ H^1 } \lesssim ( -t ) ^{ \mu } 
\end{equation} 
as $t\uparrow 0$. 
In particular, $u$ blows up at $t=0$ and
\begin{gather} 
(-t) ^{ \frac {1} {\alpha } - \frac {N} {2 k } } \| u(t) \| _{ L^2 } \goto _{ t\uparrow 0 } \| f \| _{ L^2 } \label{fEEN27:2} \\
 (-t) ^{ \frac {1} {\alpha } - \frac {N-2} {2 k } } \| \nabla u(t) \| _{ L^2 } \goto _{ t\uparrow 0 } \| \nabla f \| _{ L^2 }. \label{fEEN29:2} 
\end{gather} 
Moreover
\begin{equation} \label{fEEN30:2} 
u(t, x) \goto _{ t \uparrow 0 } A^{-\frac {1} {\alpha }} |x|^{-\frac {k} {\alpha }}
\end{equation} 
in $H^1 (\{ |x|>\varepsilon \} ) $ for every $\varepsilon >0$.
\end{thm}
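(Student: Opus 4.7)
The construction proceeds by approximation: choose a sequence $T_n\uparrow 0$, solve~\eqref{NLS1} with terminal data $u_n(T_n)=U(T_n)\in H^1$ by local backward $H^1$-well-posedness, and pass to the limit $n\to\infty$. Writing $u = U + v$, a direct computation from $U(t,x)=(A|x|^k-\alpha t)^{-1/\alpha}$ shows that $U$ satisfies the pointwise ODE $U_t=|U|^\alpha U$, so the error solves
\begin{equation*}
v_t = i\Delta v + \bigl(|U+v|^\alpha(U+v) - |U|^\alpha U\bigr) + i\Delta U,
\end{equation*}
with the single source $i\Delta U$ expressing the dispersive defect of $U$. The self-similar form of $U$ yields $\|U(t)\|_{L^\infty}\sim(-t)^{-1/\alpha}$ and $\|\Delta U(t)\|_{L^2}\sim(-t)^{-1/\alpha+(N-4)/(2k)}$, so for $k$ large the residual $\Delta U$ is of strictly lower order than $U_t\sim(-t)^{-1-1/\alpha}$ near $t=0$.

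The core of the proof is a uniform $H^1$ estimate on $v_n$. Pairing the error equation with $\bar v_n$ and $-\Delta\bar v_n$, using the skew-adjointness of the dispersion, the pointwise Lipschitz bound $\bigl||z|^\alpha z - |w|^\alpha w\bigr|\lesssim (|z|+|w|)^\alpha|z-w|$ together with its derivative version (for which $\alpha\ge 2$ is needed in order to preserve an $H^1$-compatible bound on $|z|^{\alpha-2}z^2$), and the source contribution $\|\Delta U\|_{H^1}\|v_n\|_{H^1}$, one obtains a differential inequality whose linear part has coefficient $\|U(t)\|_{L^\infty}^\alpha\sim 1/(-t)$. Combined with $v_n(T_n)=0$, one bootstraps the ansatz $\|v_n(t)\|_{H^1}\le M(-t)^\mu$ with $\mu>0$ chosen within the admissible window forced by $\alpha\ge 2$ and $k>\max\{10,N\alpha/2\}$: integrating the inequality backward from the vanishing datum and balancing the source smallness against the bootstrap exponent, the estimate closes uniformly in $n$ on a common interval $[T,T_n]$ with $T<0$ independent of $n$. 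Reparameterizing in self-similar time $s=-\log(-t)$ and space $y=(-t)^{-1/k}x$ is a convenient way to turn the marginally critical coefficient $1/(-t)$ into one that is constant in $s$.

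The uniform $H^1$-bound furnishes, via Banach--Alaoglu and Sobolev compactness, a weak-$*$ subsequential limit $u\in L^\infty([T,0);H^1)$; passage to the limit in the Duhamel formulation verifies that $u$ is a strong $H^1$-solution of~\eqref{NLS1} satisfying~\eqref{fEEN26:2}, and $H^1$-local well-posedness extends $u$ back to $(-\infty,0)$. The blow-up rates~\eqref{fEEN27:2}--\eqref{fEEN29:2} then follow because the $L^2$ and $\dot{H}^1$ norms of $U(t)$ strictly dominate the error of order $(-t)^\mu$ when $k$ is large; the pointwise limit~\eqref{fEEN30:2} follows from $U(t,x)\to A^{-1/\alpha}|x|^{-k/\alpha}$ in $H^1(\{|x|>\varepsilon\})$ combined with $\|v(t)\|_{H^1}\to 0$.

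The principal difficulty is closing the $H^1$ bootstrap against the marginally critical coefficient $1/(-t)$, which would dominate any naive Gronwall attempt and produce growth strictly incompatible with the ansatz. The resolution must exploit both the vanishing of $v_n$ at $T_n$ and the smallness of the source $\|\Delta U\|_{H^1}$ in a finely tuned way: it is exactly this balancing that forces both technical assumptions $\alpha\ge 2$ (to commute a derivative through the nonlinearity) and $k>\max\{10,N\alpha/2\}$ (to place $U(t)$ in $H^1$ and to make the decisive exponents positive).
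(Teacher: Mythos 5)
Your overall architecture --- approximate solutions $u_n$ with data $u_n(T_n)=U(T_n)$, uniform $H^1$ energy estimates on the error, weak compactness, and backward global well-posedness to extend --- is exactly the paper's (which deduces Theorem~\ref{eThm2} as a special case of Theorem~\ref{eThm1}). But the heart of the energy estimate, as you describe it, does not close, and you have not supplied the idea that makes it close. If the nonlinear difference is handled by the Lipschitz bound $\bigl||z|^\alpha z-|w|^\alpha w\bigr|\lesssim(|z|+|w|)^\alpha|z-w|$, the resulting differential inequality for $\|v\|_{L^2}$ has linear coefficient $c\,\|U(t)\|_{L^\infty}^\alpha=c\,(\alpha(-t))^{-1}$ with $c\ge\alpha+1$; Gronwall backward from $v(T_n)=0$ against a source $\|\Delta U(s)\|_{L^2}\sim(-s)^{-a}$, $a\in(0,1)$, yields
\begin{equation*}
\|v(t)\|_{L^2}\le (-t)^{c/\alpha}\int_t^{T_n}(-s)^{-a-c/\alpha}\,ds\ \sim\ (-t)^{c/\alpha}\,(-T_n)^{1-a-c/\alpha},
\end{equation*}
and since $c/\alpha>1>1-a$ this diverges as $n\to\infty$. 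Neither the vanishing of $v_n$ at $T_n$ nor the self-similar variables repairs this: in $s=-\log(-t)$ the coefficient becomes an order-one constant on an interval whose length tends to infinity, i.e.\ the same exponential loss. Your proposal names this as ``the principal difficulty'' and asserts it is resolved ``in a finely tuned way,'' but no mechanism is given, and none of the ingredients you list suffices.

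The missing idea is a sign, not a size: backward in time the nonlinearity is dissipative. The paper uses $\Re\bigl[(|z_1|^\alpha z_1-|z_2|^\alpha z_2)(\overline{z_1}-\overline{z_2})\bigr]\ge0$ (inequality~\eqref{fEE1}), so that in $\frac{d}{dt}\|\varepsilon\|_{L^2}^2$ the entire nonlinear difference term is nonnegative and is simply discarded; the $L^2$ bound then follows by integrating only the source, with no Gronwall factor at all (see~\eqref{fGCA2}--\eqref{fELD1}). At the $\dot H^1$ level the analogous good term is $\int|U+\varepsilon|^\alpha|\nabla\varepsilon|^2\ge0$, which moreover absorbs the worst cross term by Cauchy--Schwarz (see~\eqref{fGCA8}); what remains is controlled by the already-established $L^2$ bound, Gagliardo--Nirenberg, and the decay of $\|\nabla\Delta U\|_{L^2}$, producing a coefficient $(T_n-t)^{-1+\widetilde\rho}$ with $\widetilde\rho>0$ --- strictly better than $(-t)^{-1}$ --- so the bootstrap~\eqref{fGCA15}--\eqref{fGCA16} closes uniformly in $n$. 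Two smaller discrepancies: $\alpha\ge2$ enters through $|U+\varepsilon|^{\alpha-2}\lesssim|U|^{\alpha-2}+|\varepsilon|^{\alpha-2}$ in~\eqref{fGCA9}, not through differentiating the nonlinearity (which only needs $\alpha\ge1$, for~\eqref{fPE1}--\eqref{fPE2}); and the limit is identified in $H^{-1}$ and upgraded to a continuous $H^1$ solution via the uniqueness statement of Proposition~\ref{eRem3:1}, rather than through the Duhamel formula.
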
 

Theorem~\ref{eThm2} is a particular case of a more general result (Theorem~\ref{eThm1} below), where asymptotic profiles more general than~\eqref{fEEN30:2} are allowed. 
Before stating precisely this more general result, we need to make precise our assumptions on the asymptotic profile. 
We consider an integer $J\ge 1$, real numbers $\rho , \nu $, $(k_j) _{ 1\le j\le J } $, $(\eta _{ j, \beta }) _{ 1\le j\le J }$ where $\beta $ is a multi-index with $ |\beta |\le 3$, and points $(x_j) _{ 1\le j\le J }\subset \R^N $ such that 
\begin{equation} \label{fHYP1} 
\begin{cases} 
\rho , \eta_{j,\beta}, K >0, \quad \nu > \frac {N\alpha } {2} \\
 | x_j - x_\ell |\ge 2\rho , \quad j\not = \ell \\
 k_1 \le \cdots \le k_J \\
 k_1 > 10, \quad k_J > \frac {N\alpha } {2} 
\end{cases} 
\end{equation} 
Let $\phi \in C^3 ( \R^N , \R )$ satisfy
\begin{equation} \label{fHYP2} 
\begin{cases} 
\phi (x) >0 \quad \quad x\not = x_j, 1\le j\le J, \\
\displaystyle | y |^{- k_j + |\beta |} | D^\beta \phi (x_j +y) | \goto _{ |y| \to 0 } \eta _{ j, \beta } \quad \quad 1\le j\le J , 0\le |\beta |\le 3, \\
\displaystyle \liminf _{ |x|\to \infty } |x|^{-\nu } \phi (x) >0 \\
\displaystyle \limsup _{ |x| \to \infty } | x| ^{ - \nu } | D^\beta \phi (x) | < \infty \quad \quad 1\le |\beta |\le 3 .
\end{cases} 
\end{equation} 
Our main result is the following

\begin{thm} \label{eThm1} 
Let $N\ge 1$ and let $\alpha >0$ satisfy~\eqref{fDFG0} and~\eqref{fCT1}. Let $\phi \in C^3 ( \R^N , \R )$ satisfy~\eqref{fHYP1}-\eqref{fHYP2}. 
Let $U$ be defined by
\begin{equation} \label{fCT8}
U(t, x) = (- \alpha t + \phi (x))^{- \frac {1} {\alpha }}\quad t<0, \, x\in \R^N.
\end{equation} 
It follows that there exist a solution $u\in C((-\infty , 0), H^1 (\R^N ) ) $ of~\eqref{NLS1} and $ \mu >0$ such that
\begin{equation} \label{fEEN26} 
 \| u(t) - U(t) \| _{ H^1 } \lesssim ( -t ) ^{ \mu } 
\end{equation} 
as $t\uparrow 0$. 
In particular, $u$ blows up at $t=0$ and
\begin{gather} 
 (-t) ^{ \frac {1} {\alpha } - \frac {N} { 2 k_J } } \| u(t) \| _{ L^2 } \goto _{ t\uparrow 0 } a \label{fEEN27} \\
 (-t) ^{ \frac {1} {\alpha } +\theta } \| \nabla u(t) \| _{ L^2 } \goto _{ t\uparrow 0 } b \label{fEEN29} 
\end{gather} 
where $a, b>0$ and
\begin{equation} \label{fEU1:2:b2} 
\theta = 
\begin{cases} 
 \frac {2 - N} {2 k_J } & N=1,2 \\
 \frac {2 - N} {2 k_1 } & N=3,4
\end{cases} 
\end{equation} 
Moreover
\begin{equation} \label{fEEN30} 
u(t, x) \goto _{ t \uparrow 0 } \phi (x)^{-\frac {1} {\alpha }}
\end{equation} 
in $H^1 (\omega ) $ for every $\displaystyle \omega \subset \subset \R^N \setminus \mathop{\cup} _{ 1\le j\le J } \{ x_j\}$.
\end{thm}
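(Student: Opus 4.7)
My plan is to treat $U$ as an approximate solution of~\eqref{NLS1} and to construct the genuine solution by a backward-in-time approximation combined with compactness. A direct computation from~\eqref{fCT8} shows that $U$ satisfies the pointwise ODE $U_t = |U|^\alpha U$ exactly, so that substituting $U$ into~\eqref{NLS1} leaves only the residual $-i\Delta U$. My first task is to quantify that, near each blow-up point $x_j$, this residual is substantially smaller than $U_t$. Under~\eqref{fHYP2} one has $\phi(x_j + y)\sim \eta_{j,0}|y|^{k_j}$, hence
\[
|\Delta U| \lesssim (-\alpha t + |y|^{k_j})^{-\frac{1}{\alpha}-1}|y|^{k_j - 2} + (-\alpha t + |y|^{k_j})^{-\frac{1}{\alpha}-2}|y|^{2k_j - 2}.
\]
At the natural scale $|y|\sim (-t)^{1/k_j}$ this is a factor $(-t)^{1 - 2/k_j}$ smaller than $U_t$, genuinely small thanks to $k_j > 10$. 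From this and the coercivity built into~\eqref{fHYP2} I will extract quantitative bounds on $\|\Delta U(t)\|_{L^2}$, $\|\nabla\Delta U(t)\|_{L^2}$ and $\|\nabla U(t)\|_{L^2}$ that beat the bare scaling of $U_t$ by some polynomial gain $\nu_0>0$.

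For each $s$ close to $0^-$, the datum $U(s)\in H^1(\R^N )$ is nonsingular, so the local $H^1$-Cauchy theory for~\eqref{NLS1} provides a solution $u_s\in C((T_{\min}(s),s], H^1(\R^N ))$. Setting $w_s := u_s - U$, one has $w_s(s)=0$ and
\[
\partial_t w_s = i\Delta w_s + \bigl[|U + w_s|^\alpha(U + w_s) - |U|^\alpha U\bigr] + i\Delta U.
\]
The heart of the argument---and by far the main obstacle---is to derive, via $L^2$ and $\dot H^1$ energy estimates on $w_s$, a uniform bound $\|w_s(t)\|_{H^1}\le C(-t)^\mu$ on an interval $[T_*, s]$ with $T_* < 0$ independent of $s$. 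Two competing effects must be reconciled. Linearizing the nonlinearity (which is $C^2$ precisely when $\alpha\ge 2$, explaining~\eqref{fCT1}) gives a contribution bounded by $\|U(t)\|_{L^\infty}^\alpha\sim (-t)^{-1}$, whose naive Gronwall integration is logarithmically divergent as $t\uparrow 0$. The source $i\Delta U$ carries the polynomial gain from Step~1. I expect to resolve this by introducing a suitable weighted energy---essentially renormalizing by a power of $(-\alpha t + \phi(x))$ so that the singular linear coefficient becomes integrable against the weight---followed by a bootstrap/continuity argument that closes the estimate as long as $\mu$ is chosen strictly less than the residual gain. The genuinely nonlinear remainder, of order $|U|^{\alpha-1}|w_s|^2 + |w_s|^{\alpha+1}$, is controlled at the $H^1$ level using the Sobolev embeddings underlying the $H^1$-(sub)critical assumption~\eqref{fDFG0}, and absorbed via the smallness of $\|w_s\|_{H^1}$ propagated from $t = s$.

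With the uniform bound in hand, I would extract from $(u_{s_n})$, along a sequence $s_n\uparrow 0$, a weak-$\ast$ limit $u$ in $L^\infty_{\Loc}((T_*,0); H^1)$. Applying an Aubin-Lions-type argument to the equation promotes this to strong convergence in $C_{\Loc}((T_*,0); L^2_{\Loc})$, which is enough to pass to the limit in the nonlinearity (again thanks to~\eqref{fDFG0}) and to inherit the bound $\|u(t) - U(t)\|_{H^1}\lesssim (-t)^\mu$, which is~\eqref{fEEN26}. The local Cauchy theory then extends $u$ backwards to $(-\infty,0)$ starting from $u(T_*)\in H^1$. The blow-up statements~\eqref{fEEN27}--\eqref{fEEN30} follow from~\eqref{fEEN26} combined with a direct calculation of the $U$-norms: rescaling $x = x_j + (-t)^{1/k_j}y$ near each $x_j$ reads off the $L^2$ and $\dot H^1$ scalings, the dominant index being $k_J$ or $k_1$ according to~\eqref{fEU1:2:b2} and the dimension, while dominated convergence on $\omega \subset\subset \R^N \setminus\{x_1,\dots,x_J\}$ yields~\eqref{fEEN30}.
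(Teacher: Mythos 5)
Your overall architecture coincides with the paper's: take data $u_s(s)=U(s)$ at times $s\uparrow 0$, prove a uniform estimate $\|u_s(t)-U(t)\|_{H^1}\lesssim (-t)^{\mu}$ on a time interval whose length is independent of $s$, and pass to the limit by weak compactness; the asymptotics \eqref{fEEN27}--\eqref{fEEN30} are then read off from the explicit norms of $U$ exactly as you describe. The genuine gap is in the central energy estimate, which you correctly identify as the main obstacle but do not actually close. Your proposed remedy --- a weighted energy with weight a power of $-\alpha t+\phi(x)$ plus a bootstrap --- is left entirely speculative, and an $x$-dependent weight commuted against $i\Delta$ generates first-order terms that are not obviously harmless. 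The paper's resolution is different and structural: the map $z\mapsto|z|^\alpha z$ is monotone, i.e. $\Re\bigl[(|z_1|^\alpha z_1-|z_2|^\alpha z_2)(\overline{z_1}-\overline{z_2})\bigr]\ge 0$ (inequality \eqref{fEE1}), so when the error $\varepsilon=u_s-U$ is propagated \emph{backward} from $t=s$ the difference of nonlinearities enters $\frac{d}{dt}\|\varepsilon\|_{L^2}^2$ with a favorable sign and can simply be discarded; the coefficient $\|U\|_{L^\infty}^{\alpha}\sim(-t)^{-1}$ you worry about never appears in a Gronwall loop, and the $L^2$ bound follows by integrating only the source $\|\Delta U\|_{L^2}$. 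At the $\dot H^1$ level the same sign structure produces the positive term $\int|U+\varepsilon|^{\alpha}|\nabla\varepsilon|^2$, which is then used via Cauchy--Schwarz to absorb the worst cross term $\int|\varepsilon|\,|U+\varepsilon|^{\alpha-1}|\nabla\varepsilon|\,|\nabla U|$; the remaining terms carry the integrable coefficients $\|U\|_{L^\infty}^{\alpha-1}\|\nabla U\|_{L^\infty}$ and $\|\nabla\Delta U\|_{L^2}$, whose time exponents exceed $-1$ precisely because $k_1>10$. Without this mechanism (or a worked-out substitute) your estimate does not close, and neither does the proof.

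A secondary omission: you invoke only the local Cauchy theory, whereas the construction needs the solutions $u_s$ on an interval of fixed length backward in time and, to identify the limit as a genuine solution and recover continuity in $H^1$, a uniqueness statement in the class $L^\infty H^1\cap W^{1,\infty}H^{-1}$. The paper settles both in Proposition~\ref{eRem3:1} by proving global backward well-posedness: the same monotonicity gives decay of $\|v\|_{L^2}$ and $\|\nabla v\|_{L^2}$, with a space-time integrability bound handling the critical case $(N-2)\alpha=4$, where a purely local theory combined with your (unproven) a priori bound would not suffice.
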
 

\begin{rem} \label{eRM1} 
Here are some comments on Theorems~\ref{eThm2} and~\ref{eThm1}.
\begin{enumerate}[{\rm (i)}] 

\item \label{eRM1:1}
Theorem~\ref{eThm2} presents the simplest result with the choice of only two parameters $A$ and $k$.
The parameter $k$ has to be taken sufficiently large so that the ansatz $U$ satisfies $|\Delta U|\ll U_t$
(a similar strategy is used in \cite{CollotGM}, see the comments below). Note that the choice of $k$ determines the blow-up rates in \eqref{fEEN27:2}--\eqref{fEEN29:2}.
More parameters can chosen in Theorem~\ref{eThm1}, which allows arbitrary locations for the blow-up points and flexibility on the blow-up rates. It is easy to construct explicitly functions $\phi$ satisfying \eqref{fHYP2}.
\item \label{eRM1:2}
It follows from~\eqref{fEEN27} and~\eqref{fEEN29} that both $\| u(t) \| _{ L^2 }$ and $\| \nabla u(t) \| _{ L^2 }$ blow up as $t\uparrow 0$.

\item \label{eRM1:3}
The asymptotic profile $f = \phi ^{-\frac {1} {\alpha }} $ of $u$ as $t\uparrow 0$, given by~\eqref{fEEN30}, has the following properties: $f>0$, $ |x|^{ \frac {\nu} {\alpha }} f(x) $ is bounded as $ |x| \to \infty $ and $f$ is $C^3$ except at the points $x_j$, where $f$ has a singularity like $ |x-x_j|^{- \frac {k_j} {\alpha }}$. 

\item \label{eRM1:4} The solutions constructed in Theorems~\ref{eThm2} and~\ref{eThm1} are global for $t<0$.
Actually, this is a general fact : we show in Proposition~\ref{eRem3:1} that equation~\eqref{NLS1} is globally well-posed
in $H^1$ in the negative sense of time.

\end{enumerate} 
\end{rem} 

We prove Theorem~\ref{eThm1} by using the strategy of~\cite{Merle1}.
More precisely, we consider the sequence $(u_n) _{ n\ge 1 }$ of solutions of~\eqref{NLS1} defined by $u_n (- \frac {1} {n}) = U (- \frac {1} {n})$, where $U$ is defined by~\eqref{fCT8}. 
It follows that $u_n$ is defined on $(-\infty , -\frac {1} {n}]$. 
Since $U_t = | U |^\alpha U$, and $\Delta U$ is small compared to $U_t$, $U$ is almost a solution of~\eqref{NLS1}. Following the idea of~\cite{Martel} (see also~\cite{RaphaelS} in the blow-up context) we estimate the solutions $u_n$ by energy arguments. 
Note that the nice behavior of equation \eqref{NLS1} backwards in time, already discussed in Remark~\ref{eRM1} \eqref{eRM1:4}, is important in this step.
Finally, passing to the limit as $n\to \infty $ yields the solution $u$ of Theorem~\ref{eThm1}. 

The solution $u$ given by Theorem~\ref{eThm1} blows up at $t=0$ like the function $U$ defined by~\eqref{fCT8}. Since $U_t = |U|^\alpha U$, we see that the solution $u$ displays an ODE-type blowup. 

We recall that ODE-type blowup has been   intensively studied for several other nonlinear equations.
For the nonlinear heat equation, the type of blowup obtained in Theorems~\ref{eThm2} and~\ref{eThm1} is called \emph{flat} blowup in the literature and was recently investigated independently of our work in
~\cite{CollotGM} (see also previous references there) with applications to the Burgers equation. Even if the blow-up profile is identical (see~\S 4.1 of \cite{CollotGM}), the strategy to proceed with the construction of an actual solution of the equation is different in this paper.

Apart from such unstable forms of blowup, ODE-type blowup was also much studied as a stable form of blowup. We  refer to \cite{CollotGM, MerleZ, NoZa} and to references there for results in the parabolic context. For the semilinear wave
and quasilinear wave equations, we refer \emph{e.g.} to \cite{Alinhac,MerleZ2,MerleZ3,Speck} and to the references there.

The rest of the paper is organized as follows. In Section~\ref{sTBE}, we study the Cauchy problem for the backwards version of equation~\eqref{NLS1}, and in Section~\ref{sEST1}, we derive various estimates on the function $U$. Section~\ref{sEST2} is devoted to the construction and estimates of the approximate blow-up solutions, and the proof of Theorem~\ref{eThm1} is completed in Section~\ref{sCOMP} by passing to the limit in the approximate solutions. 

\section{The backwards equation} \label{sTBE} 

In this section, we prove that the Cauchy problem for the backwards equation obtained by changing $t$ to $-t$ in~\eqref{NLS1} is globally well-posed in $H^1 (\R^N ) $, under assumption~\eqref{fDFG0}. 

\begin{prop} \label{eRem3:1} 
Let $N\ge 1$ and assume~\eqref{fDFG0}. 
Given any $\DIb \in H^1 (\R^N ) $, there exists a solution $u\in C([0,\infty ), H^1 (\R^N ) )\cap C^1 ( [0, \infty ), H^{-1} (\R^N ) )$ of 
\begin{equation} \label{fBCK1} 
\begin{cases} 
 v_t = -i \Delta v - |v|^\alpha v \\
v(0) = \DIb .
\end{cases} 
\end{equation}
In addition, if $\tau >0$ and $u_1, u_2 \in L^\infty ((0,\tau ), H^1 (\R^N ) )\cap W^{1, \infty } ((0,\tau ), H^{-1} (\R^N ) )$ are two solutions of~\eqref{fBCK1} on $(0, \tau )$, then $u_1 = u_2$.
\end{prop}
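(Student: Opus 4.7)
The plan is to exploit two features of the backwards problem. First, the linear operator $v\mapsto -i\Delta v$ generates the unitary Schr\"o\-din\-ger group $T(t):=e^{-it\Delta}$, so the $H^1$ Strichartz theory of~\cite{CLN10} (which is oblivious to the direction of time) yields a local maximal solution $v\in C([0,\Sma),H^1(\R^N))$ via the Duhamel formula $v(t)=T(t)\DIb - \int_0^t T(t-s)|v(s)|^\alpha v(s)\,ds$, together with the standard blow-up alternative $\|v(t)\|_{H^1}\to\infty$ as $t\uparrow \Sma$ if $\Sma<\infty$. Second, the nonlinearity $-|v|^\alpha v$ is \emph{dissipative}, and I intend to translate this into $H^1$ monotonicity to rule out blow-up.

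To make the dissipativity precise I would first approximate $\DIb$ in $H^1$ by $H^3$ data so that the manipulations below are licit, then pair the equation successively with $\bar v$ and with $-\Delta\bar v$ and take real parts. In both cases the dispersive contribution vanishes (it is purely imaginary after integrating by parts), leaving
\[
\frac{1}{2}\frac{d}{dt}\|v\|_{L^2}^2 = -\int_{\R^N}|v|^{\alpha+2} \le 0
\]
and, after a further integration by parts on the nonlinear term using the identity $|v|\nabla|v|=\Re(\bar v\nabla v)$,
\[
\frac{1}{2}\frac{d}{dt}\|\nabla v\|_{L^2}^2 = -\int_{\R^N}|v|^\alpha |\nabla v|^2 - \alpha\int_{\R^N}|v|^\alpha |\nabla |v||^2 \le 0.
\]
Consequently $\|v(t)\|_{H^1}\le \|\DIb\|_{H^1}$ on the smooth approximants; continuous dependence of the mild flow in $H^1$ transfers this bound to the $H^1$ solution, which together with the blow-up alternative forces $\Sma=+\infty$.

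For uniqueness, let $w=u_1-u_2$: by hypothesis $w\in L^\infty((0,\tau),H^1)$ and $w_t\in L^\infty((0,\tau),H^{-1})$, so $\frac{1}{2}\frac{d}{dt}\|w\|_{L^2}^2 = \Re\langle w_t,w\rangle_{H^{-1},H^1}$ is well defined almost everywhere. The dispersive term drops out exactly as before, leaving
\[
\frac{1}{2}\frac{d}{dt}\|w\|_{L^2}^2 = -\Re\int_{\R^N}\bar w\bigl(|u_1|^\alpha u_1 - |u_2|^\alpha u_2\bigr);
\]
then the pointwise bound $\bigl||u_1|^\alpha u_1 - |u_2|^\alpha u_2\bigr| \le C(|u_1|^\alpha + |u_2|^\alpha)|w|$, combined with H\"older, the Sobolev embedding $H^1\hookrightarrow L^{\alpha+2}$ (valid under~\eqref{fDFG0}), and a Gagliardo--Nirenberg interpolation, yields a Gronwall-type inequality which, with $w(0)=0$, forces $w\equiv 0$. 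The principal technical obstacle throughout is the rigorous justification of the $\dot H^1$ monotonicity at mere $H^1$ regularity, since $\|\nabla v(t)\|^2$ is not a priori differentiable in $t$; the density argument indicated above is the cleanest way around this, with some additional care from mixed space-time Strichartz norms required in the energy-critical case $(N-2)\alpha=4$.
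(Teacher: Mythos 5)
Your local theory and the two dissipation identities are correct and match the paper's Steps (your form of the $\dot H^1$ identity, $\frac12\frac{d}{dt}\|\nabla v\|_{L^2}^2=-\int|v|^\alpha|\nabla v|^2-\alpha\int|v|^\alpha|\nabla|v||^2$, is equivalent to the one in~\eqref{fCC1}). But there are two genuine gaps. The more serious one is uniqueness. You reach
\[
\tfrac12\tfrac{d}{dt}\|w\|_{L^2}^2=-\Re\int \bigl(|u_1|^\alpha u_1-|u_2|^\alpha u_2\bigr)\overline w
\]
and then propose to bound the right-hand side by $C\int(|u_1|^\alpha+|u_2|^\alpha)|w|^2$ and close by H\"older, Sobolev, Gagliardo--Nirenberg and Gronwall. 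This does not close in the class $L^\infty((0,\tau),H^1)$ for $N\ge2$: since $H^1$ does not control $L^\infty$, any H\"older/interpolation splitting of $\int|u_i|^\alpha|w|^2$ leaves a positive power of $\|w\|_{H^1}$ (bounded but not small) and only a power $\|w\|_{L^2}^{2(1-\theta)}$ with $\theta>0$, i.e.\ an inequality of the form $y'\lesssim y^{1-\theta}$, which is not of Gronwall type and admits nonzero solutions with $y(0)=0$. This is exactly the classical obstruction to ``unconditional uniqueness'' for NLS in $C([0,T],H^1)$, and here the uniqueness class is even rougher ($L^\infty H^1\cap W^{1,\infty}H^{-1}$, with no Strichartz norms available). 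The missing idea is that no estimate of the nonlinear term is needed at all: the accretivity inequality $\Re\bigl[(|z_1|^\alpha z_1-|z_2|^\alpha z_2)(\overline{z_1}-\overline{z_2})\bigr]\ge0$ (see~\eqref{fEE1}) shows the right-hand side above is nonpositive, so $\|w(t)\|_{L^2}\le\|w(0)\|_{L^2}=0$ directly.

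The second gap is the energy-critical case $(N-2)\alpha=4$. You invoke the blow-up alternative ``$\|v(t)\|_{H^1}\to\infty$ as $t\uparrow\Sma$'', but that is the subcritical alternative; in the critical case a uniform a priori $H^1$ bound does not by itself preclude finite-time blowup (the local existence time depends on the profile of the data, not only on its $H^1$ norm). You flag that ``additional care'' is needed but do not supply the mechanism. The paper's resolution is to integrate your own dissipation identity in time to get $\int_0^{\Tma}\int|v|^\alpha|\nabla v|^2\le\|\nabla v(0)\|_{L^2}^2$, note $|v|^\alpha|\nabla v|^2\gtrsim|\nabla|v|^{(\alpha+2)/2}|^2$, and apply Sobolev to obtain $\int_0^{\Tma}\|v(t)\|_{L^{N(\alpha+2)/(N-2)}}^{\alpha+2}<\infty$, a critical space-time bound that forces $\Tma=\infty$. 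The ingredient is already present in your identity but is left unexploited.
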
 

\begin{proof} 
Assumption~\eqref{fDFG0} ensures that equation~\eqref{fBCK1} is $H^1$-subcritical or critical, so that local well-posedness in $H^1 (\R^N ) $ follows from standard arguments. (See e.g.~\cite{CLN10}.)

We now prove that the Cauchy problem~\eqref{fBCK1} is globally well posed in $H^1 (\R ^N) $. 
Indeed, let $\DIb\in H^1 (\R^N ) $, and let $\Tma$ be the maximal existence time of the corresponding solution $v$ of~\eqref{fBCK1}. 
Multiplying the equation by $ \overline{v} $ and taking the real part yields
\begin{equation*} 
\frac {d} {dt} \| v (t) \| _{ L^2 }^2 = -2 \| v (t) \| _{ L^{\alpha +2} }^{\alpha +2} \le 0
\end{equation*} 
so that $ \| v(t)\| _{ L^2 } \le \| v(0) \| _{ L^2 }$ for all $t\ge 0$. Next, we multiply the equation by $-\Delta \overline{v} $ and take the real part. We obtain (see~\eqref{fDFG2}-\eqref{fDFG3} below)
\begin{equation} \label{fCC1} 
\begin{split} 
\frac {d} {dt} \| \nabla v (t) \| _{ L^2 }^2 & = -2 \Re \int _{ \R^N } \nabla \overline{v} \cdot \nabla ( |v|^\alpha v) 
\\ & =- \frac {\alpha +2} {2} \int _{ \R^N } |v|^\alpha |\nabla v|^2 - \frac {\alpha } {2} \Re \int _{ \R^N } |v|^{\alpha -2}v^2 (\nabla \overline{v})^2 \\ & \le - \int _{ \R^N } |v|^\alpha |\nabla v|^2 \le 0 
\end{split} 
\end{equation} 
so that $ \| \nabla v (t)\| _{ L^2 } \le \| \nabla v (0) \| _{ L^2 }$ for all $0\le t< \Tma$. These formal calculations can be justified by standard arguments, see e.g.~\cite{Ozawa}. 
This proves global existence in the subcritical case $(N-2) \alpha < 4$. In the critical case $N\ge 3$ and $\alpha =\frac {4} {N-2}$, we use the right-hand side of~\eqref{fCC1} to obtain
\begin{equation*} 
\int _0^{\Tma } \int _{ \R^N } |v|^\alpha |\nabla v|^2 \le \| \nabla v (0) \| _{ L^2 } ^2.
\end{equation*} 
Since
\begin{equation*} 
 |v|^\alpha |\nabla v|^2 \ge |v|^\alpha |\nabla |v|\, |^2= \frac {4} {(\alpha +2)^2} | \nabla |v|^{ \frac {\alpha +2} {2} } |^2
\end{equation*} 
we deduce by Sobolev's inequality that
\begin{equation*} 
\int _0^{\Tma } \| v(t) \| _{ L^{ \frac {N(\alpha +2)} {N-2}} }^{\alpha +2} < \infty .
\end{equation*} 
We define $2< r < N$ by
\begin{equation} \label{fCC3} 
\frac {N(\alpha +2)} {N-2} = \frac {Nr} {N- r }
\end{equation} 
so that
\begin{equation} \label{fCC4} 
\int _0^{\Tma } \| v(t) \| _{ L^{ \frac {Nr } {N-r}} }^{\alpha +2} < \infty .
\end{equation}
Since by~\eqref{fCC3} 
\begin{equation*} 
\frac {2} {\alpha +2 } = N \Bigl( \frac {1} {2} - \frac {1} { r } \Bigr)
\end{equation*} 
we deduce from~\eqref{fCC4} that $\Tma =\infty $. (See~\cite[Remark~4.5.4~(ii)]{CLN10}.)

Finally, we prove the stronger uniqueness property, so we consider $\tau >0$ and two solutions $u_1, u_2 \in L^\infty ((0,\tau ), H^1 (\R^N ) )\cap W^{1, \infty } ((0,\tau ), H^{-1} (\R^N ) )$ of~\eqref{fBCK1} on $(0, \tau )$.
Setting 
\begin{equation*} 
w (t) = u_1 (t) - u_2 (t) , \quad 0\le t\le \tau 
\end{equation*} 
 we see that $w\in L^\infty ((0,\tau ) , H^1 (\R^N ) ) \cap W^{1, \infty } ( (0, \tau ), H^{-1} (\R^N ) )$ satisfies
\begin{equation*} 
w_t = - i \Delta w - | u_1 |^\alpha u_1 + | u_2 |^\alpha u_2
\end{equation*} 
in $L^\infty ((0,\tau ), H^{-1} (\R^N ) ) $. 
Moreover, the map $t\mapsto \| w(t) \| _{ L^2 }^2$ is in $W^{1, \infty } (0,\tau )$ and
\begin{equation} \label{fUNQ3:b1:b} 
\frac {1} {2}\frac {d} {dt} \| w(t) \| _{ L^2 }^2 = \langle w_t, w\rangle _{ H^{-1}, H^1 }
\end{equation} 
for a.a. $t\in (0,\tau )$. 
Taking the $H^{-1}-H^1$ duality product of equation~\eqref{fBCK1} with $w$ and applying~\eqref{fUNQ3:b1:b}, we deduce that for a.a. $t\in (0,\tau )$
\begin{equation} \label{fUNQ4:b} 
\frac {1} {2}\frac {d} {dt} \| w(t) \| _{ L^2 }^2 = - \langle | u_1 |^\alpha u_1 - | u_2 |^\alpha u_2 , w\rangle _{ H^{-1}, H^1 }
= - \Re \int ( | u_1 |^\alpha u_1 - | u_2 |^\alpha u_2 ) \overline{w} .
\end{equation} 
We recall that
\begin{equation} \label{fEE1} 
 \Re [ ( | z_1 |^\alpha z_1 - | z_2 |^\alpha z_2) ( \overline{z_1} - \overline{z_2} ) ] \ge 0
\end{equation} 
for all $z_1,z_2 \in \C$. 
Indeed,
\begin{equation*} 
\begin{split} 
 \Re ( | z_1 |^\alpha z_1 - | z_2 |^\alpha z_2 ) ( \overline{z_1} - \overline{z_2} ) & = |z_1|^{\alpha +2} + |z_2|^{\alpha +2} - ( |z_1|^\alpha + |z_2|^\alpha ) \Re z_1 \overline{z_2} \\ & \ge |z_1|^{\alpha +2} + |z_2|^{\alpha +2} - ( |z_1|^\alpha + |z_2|^\alpha ) |z_1| \, |z_2| \\ & = ( |z_1|^{\alpha +1} - |z_2|^{\alpha +1} ) ( |z_1| - |z_2| ) \ge 0.
\end{split} 
\end{equation*} 
Since $w (0) =0$, we deduce from~\eqref{fUNQ4:b} and~\eqref{fEE1} that $w\equiv 0$ on $(0,\tau )$.
\end{proof} 

\section{Estimates of $U$} \label{sEST1} 

In this section, we establish various estimates on the function $U$ defined by~\eqref{fCT8}.

\begin{lem} \label{eEU1} 
Let $\phi \in C^3 ( \R^N , \R )$ satisfy~\eqref{fHYP1}-\eqref{fHYP2}.
If $U$ is given by~\eqref{fCT8}, then $U\in C((-\infty , 0), H^3 (\R^N ) )$ and
\begin{align} 
 \| U (t) \| _{ L^\infty } & \lesssim (-t)^{-\frac {1} {\alpha } } , \label{fEU1:1} \\ 
 \| \nabla U (t) \| _{ L^\infty } & \lesssim (-t)^{-\frac {1} {\alpha }- \frac {1} { k_1 }} , \label{fEU1:2}\\
 \| \Delta U (t) \| _{ L^2 } & \lesssim (-t)^{-\frac {1} {\alpha }- \frac {4 - N} {2 k_1 }} , \label{fEU1:3} \\
 \| \nabla \Delta U (t) \| _{ L^2 } & \lesssim (-t)^{-\frac {1} {\alpha }- \frac { 6 - N} {2 k_1 }} , \label{fEU1:4}
\end{align} 
as $t\uparrow 0$, and
\begin{align} 
 (-t)^{ \frac {1} {\alpha }- \frac { N} {2 k_J }} \| U (t) \| _{ L^2 } & \goto _{ t\uparrow 0 } a>0 , \label{fEU1:1:b1} \\
 (-t)^{ \frac {1} {\alpha } + \theta } \| \nabla U (t) \| _{ L^2 } & \goto _{ t\uparrow 0 } b>0 , \label{fEU1:2:b1} 
\end{align} 
where $\theta $ is given by~\eqref{fEU1:2:b2}. 
\end{lem}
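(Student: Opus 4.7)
The plan is to split $\R^N$ into three characteristic regions and analyze the integrals separately in each: small balls $B(x_j,\rho)$ around the blow-up points, a bounded bulk region where $\phi$ stays bounded below away from $0$, and the exterior $\{|x|>R\}$ controlled by $\phi(x)\gtrsim|x|^\nu$. Regularity is immediate: for $t<0$ the denominator $\Phi:=-\alpha t+\phi$ satisfies $\Phi\ge-\alpha t>0$ on all of $\R^N$, and since $\phi\in C^3$ every derivative $D^\beta U$ with $|\beta|\le 3$ is a continuous function of $(t,x)$. The membership $U\in C((-\infty,0),H^3(\R^N))$ then reduces to the quantitative $L^2$ bounds proved below.

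The pointwise bounds \eqref{fEU1:1}--\eqref{fEU1:2} follow from $\nabla U=-\tfrac{1}{\alpha}\Phi^{-1/\alpha-1}\nabla\phi$ using, near each $x_j$, that $|\nabla\phi(x_j+y)|\lesssim|y|^{k_j-1}$ together with $\Phi\ge\max(-\alpha t,\,c|y|^{k_j})$. A two-case split according to whether $|y|^{k_j}$ dominates $-\alpha t$ gives $|\nabla U|\lesssim(-t)^{-1/\alpha-1/k_j}$ uniformly near $x_j$; taking the worst $k_j$ (the smallest, $k_1$) produces \eqref{fEU1:2}. On the bulk $|\nabla U|$ is $O(1)$, and at infinity the estimates $\phi\gtrsim|x|^\nu$ and $|\nabla\phi|\lesssim|x|^\nu$ give $|\nabla U|\lesssim|x|^{-\nu/\alpha}$, uniformly bounded. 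The $L^\infty$ bound \eqref{fEU1:1} is proved similarly, without the split.

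For the $L^2$ bounds \eqref{fEU1:3}--\eqref{fEU1:4} I would expand $\Delta U$ and $\nabla\Delta U$ as finite sums of terms of the form $\Phi^{-1/\alpha-m}\prod_i D^{\beta_i}\phi$. Near each $x_j$ the rescaling $y=(-t)^{1/k_j}z$ converts the $L^2$ contribution of each such term into $(-t)^{-2/\alpha+(N-2m)/k_j}$ times a finite $z$-integral; integrability at $z=0$ is secured by $k_1>10$ and at $z=\infty$ by $k_J>N\alpha/2$. The worst case over $j$ (smallest $k_j=k_1$, since $N-2m\le 0$ throughout) produces the exponents $(4-N)/(2k_1)$ and $(6-N)/(2k_1)$. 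The bulk contribution is $O(1)$, and the exterior is controlled using $\nu>N\alpha/2$ together with the growth bounds on $D^\beta\phi$.

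For the asymptotic identities \eqref{fEU1:1:b1}--\eqref{fEU1:2:b1} the same rescaling inside $B(x_j,\rho)$ identifies its contribution to $\|U(t)\|_{L^2}^2$ as $(-t)^{-2/\alpha+N/k_j}$ times a quantity that, by dominated convergence, tends to $\alpha^{-2/\alpha}\int_{\R^N}(\alpha+\eta_{j,0}|z|^{k_j})^{-2/\alpha}\,dz\in(0,\infty)$, the finiteness coming from $k_J>N\alpha/2$. The dominant contribution corresponds to the $k_j$ minimizing $N/k_j$, namely $k_J$, yielding \eqref{fEU1:1:b1}; contributions from smaller $k_j$ and from the bulk and exterior are strictly of lower order. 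The same scheme for $\|\nabla U(t)\|_{L^2}^2$ produces the weight $(-t)^{-2/\alpha+(N-2)/k_j}$, and the sign of $N-2$ dictates which $k_j$ dominates, matching the case split in \eqref{fEU1:2:b2}. The main obstacle is organizational rather than conceptual: one must verify integrability of the rescaled profiles at both $z=0$ and $z=\infty$ in each derivative norm, apply the DCT with a uniform dominator, and correctly identify the dominant $(-t)$-exponent in each case. The hypotheses $k_1>10$, $k_J>N\alpha/2$, and $\nu>N\alpha/2$ enter precisely at these integrability checks.
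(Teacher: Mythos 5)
Your proposal is correct and follows essentially the same route as the paper: the explicit formulas for $\nabla U$, $\Delta U$, $\nabla\Delta U$, the three-region split (balls $B(x_j,\rho)$, a bounded bulk where $\phi$ is bounded below, and the exterior controlled by $\phi\gtrsim |x|^\nu$), the rescaling $y=(-t)^{1/k_j}z$ in the balls, and the hypotheses on $k_1$, $k_J$, $\nu$ entering precisely at the integrability checks you list. One caveat: your (correct) observation that the dominant ball for $\|\nabla U\|_{L^2}^2$ is the one minimizing the exponent $-\tfrac2\alpha+\tfrac{N-2}{k_j}$ — namely $k_1$ when $N\le 2$ and $k_J$ when $N\ge 3$ — gives $\theta=\tfrac{2-N}{2k_1}$ for $N=1,2$ and $\theta=\tfrac{2-N}{2k_J}$ for $N=3,4$, which is \eqref{fEU1:2:b2} with the two cases interchanged, so you should not assert that your computation "matches the case split" without checking: it in fact reveals an apparent typo in the stated formula (harmless when $J=1$ or $N=2$).
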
 

\begin{proof} We proceed in three steps.

\Step1 Proof of estimates~\eqref{fEU1:1}-\eqref{fEU1:2}.\quad 
We have $\| U (t) \| _{ L^\infty } = (- \alpha t )^{- \frac {1} {\alpha }}$, which implies~\eqref{fEU1:1}. 
Moreover, 
\begin{equation} \label{fEU1:5}
\nabla U= - \frac {1} {\alpha } U^{\alpha +1} \nabla \phi 
\end{equation} 
We note that by~\eqref{fCT8} and~\eqref{fHYP2}, there exists $R>0$ such that
\begin{equation} \label{fEU1:5b1}
U \le \phi (x) ^{- \frac { 1} {\alpha }}\le C | x| ^{- \frac {\nu } {\alpha }} 
\end{equation} 
 for $ |x| \ge R$. Applying~\eqref{fHYP2}, we deduce that $ | \nabla U|$ is bounded independently of $t<0$ for $ |x| \ge R$. 
Given $r>0 $, let 
\begin{equation*} 
E_r= \{x\in \R^N ;\, |x| < R \text{ and } |x-x_j| > r \text{ for all } 1\le j\le J \}. 
\end{equation*} 
It follows that $\sup \{ U; \, x\in E_\rho , t<0 \} <\infty $, so that $ | \nabla U|$ is bounded independently of $t < 0$ and $x\in E_\rho $.
For $ x\in B( x_j, \rho )$, assumption~\eqref{fHYP2} and formula~\eqref{fEU1:5} imply
\begin{equation*} 
 |\nabla U |\le \frac {1} {\alpha } U U^\alpha | \nabla \phi | \le C (-t) ^{- \frac {1} {\alpha }} \frac { |x-x_j|^{k_j -1}} { -t + |x-x_j|^{k_j }} \le C (-t) ^{- \frac {1} {\alpha } - \frac {1} {k_j }}
\end{equation*} 
 which proves~\eqref{fEU1:2}. 
 
 \Step2 Proof of estimates~\eqref{fEU1:3}-\eqref{fEU1:4}.\quad 
 We have
 \begin{equation} \label{fEU1:6}
\Delta U= - \frac {1} {\alpha } U^{\alpha +1} \Delta \phi + \frac {\alpha +1} {\alpha ^2}U^{ 2 \alpha +1} | \nabla \phi |^2
\end{equation} 
and
\begin{equation}
\label{fEU1:7} 
\begin{split} 
\nabla \Delta U = & - \frac {1} {\alpha } U^{\alpha +1} \nabla \Delta \phi + \frac {\alpha +1} {\alpha ^2} U^{ 2 \alpha +1} [ \Delta \phi \nabla \phi + \nabla ( | \nabla \phi |^2 )] \\ & - \frac { (\alpha +1) (2\alpha +1) } {\alpha ^3} U^{ 3 \alpha +1} | \nabla \phi |^2 \nabla \phi 
\end{split} 
\end{equation} 
We observe that by~\eqref{fEU1:6}, \eqref{fEU1:7}, and~\eqref{fHYP2}, there exists $R>0$ such that
\begin{equation*} 
 |\Delta U |+ | \nabla \Delta U| \le C |x| ^{- \frac {\nu } {\alpha }} 
\end{equation*} 
for all $ |x| \ge R$ and $t < 0$, so that 
\begin{equation*} 
\sup _{ t<0 } \| \Delta U \| _{ L^2 (\{ |x|>R \}) } + \| \nabla \Delta U \| _{ L^2 (\{ |x|>R \}) } <\infty .
\end{equation*} 
Moreover, $U$ is bounded on $E$, so that by~\eqref{fEU1:6} and~\eqref{fEU1:7},
\begin{equation*} 
\sup _{ t < 0 } \| \Delta U \| _{ L^2 ( E ) } + \| \nabla \Delta U \| _{ L^2 ( E ) } <\infty .
\end{equation*} 
On $ B( x_j, \rho )$, we deduce from~\eqref{fHYP2} and formulas~\eqref{fEU1:6} and~\eqref{fEU1:7} that
\begin{equation*} 
 |\Delta U |+ |x-x_j| \, | \nabla \Delta U| \le C ( - t + |x - x_j |^{k_j}) ^{ -1 - \frac {1} {\alpha }} |x-x_j|^{k_j -2}
\end{equation*} 
and estimates~\eqref{fEU1:3}-\eqref{fEU1:4} easily follow.
 
 \Step3 Proof of~\eqref{fEU1:1:b1}-\eqref{fEU1:2:b1}.\quad 
 It follows from~\eqref{fEU1:5b1}, \eqref{fEU1:5}, \eqref{fHYP1} and~\eqref{fHYP2} that $ \| U \| _{ L^2 (\{ |x|>R \}) }$ and $ \| \nabla U \| _{ L^2 (\{ |x|>R \}) }$ are bounded independently of $t < 0$. Furthermore, it is clear that $ \| U \| _{ L^2 (E _r ) }$ and $ \| \nabla U \| _{ L^2 (E _r) }$ are also bounded independently of $t <0$, for every $r>0$.
Therefore, we need only calculate $ \| U \| _{ L^2 ( B(x_j, r) ) }$ and $ \| \nabla U \| _{ L^2 ( B(x_j, r) ) }$ for $r>0$ small. 
 Assumption~\eqref{fHYP2} implies that $\phi ( x_j +y) \sim \eta _{ j, 0 } | y |^{k_j}$ for $ |y|$ small, so that for small $r>0 $
\begin{equation*} 
\int _{ B( x_j, r ) } U^2 \sim \int _{ B( 0, r ) } ( - \alpha t + \eta _{ j, 0 } |y |^{k_j})^{-\frac {2} {\alpha }}
\sim (-t) ^{- \frac {2} {\alpha } + \frac {N} {k_j}} \int _{ \R^N } (\alpha + \eta _{ j, 0 } |y|^{k_j}) ^{-\frac {2} {\alpha }}.
\end{equation*} 
The limit~\eqref{fEU1:1:b1} easily follows. 
To prove~\eqref{fEU1:2:b1}, we observe that by~\eqref{fEU1:5} and~\eqref{fHYP2}
\begin{equation*} 
\int _{ B( x_j, r ) } |\partial _\ell U|^2 \sim \int _{ B( 0, r ) } \eta _{ j, \ell } ^2 |y|^{ 2 k_j - 2} ( - \alpha t + \eta _{ j, 0 } |y |^{k_j})^{-\frac {2 (\alpha +1) } {\alpha }}
\end{equation*} 
and we conclude as above. 

\Step4 $U\in C(( - \infty , 0), H^3 (\R^N ) )$.\quad 
Given $ \tau <0$, we deduce from~\eqref{fHYP2}, \eqref{fEU1:5}, \eqref{fEU1:6} and~\eqref{fEU1:7} that
\begin{equation*} 
 |U |+ | \nabla U| + | \Delta U| + | \nabla \Delta U| \le C (1 + |x| )^{- \frac {\nu } {\alpha }} 
\end{equation*} 
for all $x\in \R^N $ and $ t\le \tau $. Since $ \frac {\nu } {\alpha } > \frac {N} {2}$, the conclusion easily follows by dominated convergence.
\end{proof} 

\section{Construction and estimates of the approximate solutions} \label{sEST2} 

We observe that by~\eqref{fCT8}, $U$ satisfies
\begin{equation*} 
U_t = |U|^\alpha U .
\end{equation*} 
We now construct approximate solutions that behave like $U$. 
More precisely, we set
\begin{equation} \label{fDFTN} 
T_n= - \frac {1} {n}
\end{equation} 
for $n\ge 1$, and we consider the solution $u_n$ of equation~\eqref{NLS1} with the initial condition
\begin{equation} \label{fGCA0} 
u_n ( T_n)= U( T_n) \in H^1 (\R^N ) 
\end{equation} 
It follows from Proposition~\ref{eRem3:1} that $u_n$ is well defined, $u_n \in C((-\infty , T_n], H^1 (\R^N ) )$.
We now establish estimates that are uniform in $n\ge 1$.
For this we set
\begin{equation} \label{fGCA1} 
u_n = U + \varepsilon _n
\end{equation} 
so that $\varepsilon _n \in C((-\infty , T_n], H^1 (\R^N ) )$.

\begin{lem} \label{eME1} 
If $\varepsilon _n$ is as above, then there exist $C, \delta ,\mu >0$ such that
\begin{equation} \label{feME1:1} 
 \| \varepsilon _n (t) \| _{ H^1 } \le C (T_n- t) ^{ \mu }
\end{equation} 
for all $T_n - \delta \le t \le T_n$.
\end{lem}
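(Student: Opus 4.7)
My proof plan is to run a backwards-in-time energy argument starting from $\varepsilon_n(T_n)=0$, exploiting the dissipative structure of the backwards NLS flow that underlies Proposition~\ref{eRem3:1}. Subtracting $U_t=|U|^\alpha U$ from~\eqref{NLS1} gives
\[
(\varepsilon_n)_t = i\Delta \varepsilon_n + \bigl(|u_n|^\alpha u_n - |U|^\alpha U\bigr) + i\Delta U,
\]
so that the whole error comes from the single source term $i\Delta U$, while the nonlinear difference acts as a \emph{favorable} dissipative term when we integrate from $T_n$ backwards, thanks to~\eqref{fEE1} and the identity~\eqref{fCC1}.

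For the $L^2$ part I would test the equation against $\overline{\varepsilon_n}$ and take the real part. The dispersive piece $i\Delta\varepsilon_n$ contributes $0$, the nonlinear difference has the right sign by~\eqref{fEE1} when integrated from $t$ up to $T_n$, and the source is bounded pointwise in time by $\|\Delta U(t)\|_{L^2}\|\varepsilon_n(t)\|_{L^2}$. Using $\varepsilon_n(T_n)=0$ one gets
\[
\|\varepsilon_n(t)\|_{L^2}\le \int_{t}^{T_n}\|\Delta U(\tau)\|_{L^2}\,d\tau.
\]
Inserting the bound $\|\Delta U(\tau)\|_{L^2}\lesssim (-\tau)^{-1/\alpha-(4-N)/(2k_1)}$ from~\eqref{fEU1:3}, whose exponent lies strictly in $(-1,0)$ under $\alpha\ge 2$ and $k_1>10$, together with the elementary subadditivity $(a+b)^\sigma - a^\sigma \le b^\sigma$ for $\sigma\in (0,1)$ (applied with $a=-T_n=1/n$, $b=T_n-t$ and $\sigma=1-1/\alpha-(4-N)/(2k_1)$), yields $\|\varepsilon_n(t)\|_{L^2}\lesssim (T_n-t)^{\sigma}$ with constant independent of $n$.

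For the $\dot H^1$ part I would test instead with $-\Delta\overline{\varepsilon_n}$ and integrate by parts. Again the dispersive term drops; the source $i\Delta U$ contributes at most $\|\nabla\Delta U(t)\|_{L^2}\|\nabla\varepsilon_n(t)\|_{L^2}$, which is integrable in time by~\eqref{fEU1:4} together with the same subadditivity trick; and the nonlinear difference, via the same algebraic identities that give~\eqref{fCC1}, splits into a \emph{diagonal} part of the form $\int |u_n|^\alpha|\nabla\varepsilon_n|^2$ (favorable sign) and \emph{cross terms} of schematic type $|U|^{\alpha-1}|\nabla U|\,|\varepsilon_n|\,|\nabla\varepsilon_n|$ coming from the expansion $u_n = U+\varepsilon_n$. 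Pointwise Cauchy--Schwarz absorbs a small multiple of $|u_n|^\alpha|\nabla\varepsilon_n|^2$ into the favorable diagonal, leaving $\|U\|_{L^\infty}^{\alpha-2}\|\nabla U\|_{L^\infty}^2\|\varepsilon_n\|_{L^2}^2$, which by~\eqref{fEU1:1}--\eqref{fEU1:2} combined with the previous $L^2$ bound carries a strictly positive power of $T_n-t$. Assembling the two estimates on a short interval $[T_n-\delta, T_n]$ (with $\delta>0$ fixed independently of $n$) and running Gronwall backwards with a bootstrap ensuring $u_n$ stays comparable to $U$, produces $\|\varepsilon_n(t)\|_{H^1}\le C(T_n-t)^\mu$ for some $\mu>0$.

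The main obstacle is controlling the cross terms uniformly in $n$, since naive estimates create non-integrable factors of $(-t)^{-1}$ coming from $\|U(t)\|_{L^\infty}^\alpha$. The hypotheses of the theorem are precisely tuned to defeat this: the condition $\alpha\ge 2$ is what permits the algebraic expansion of the nonlinear difference and the absorption of the cross term into the favorable diagonal, while $k_1>10$ ensures that all the negative exponents produced by Lemma~\ref{eEU1}, once paired with the smallness of $\|\varepsilon_n\|_{L^2}$, remain strictly larger than $-1$, so that the bootstrap closes on the short interval $[T_n-\delta,T_n]$.
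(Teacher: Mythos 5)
Your proposal follows essentially the same route as the paper: a backwards energy argument from $\varepsilon_n(T_n)=0$, with the $L^2$ bound obtained from the monotonicity~\eqref{fEE1} and the integrable source $\|\Delta U\|_{L^2}$, and the $\dot H^1$ bound obtained by testing with $-\Delta\overline{\varepsilon_n}$, keeping the favorable term $\int |U+\varepsilon_n|^\alpha|\nabla\varepsilon_n|^2$, absorbing cross terms by Cauchy--Schwarz, and closing with a bootstrap on a short interval. The only place your sketch is thinner than the paper is the cross terms carrying $|\varepsilon_n|^{\alpha-1}$ and $|\varepsilon_n|^{\alpha}$ (rather than $|U|^{\alpha-1}|\varepsilon_n|$), which have no a priori $L^\infty$ control and are handled in the paper by Gagliardo--Nirenberg interpolation \eqref{fGCA10:b1}, \eqref{fGCA10}; these produce the superlinear powers of $\|\nabla\varepsilon_n\|_{L^2}$ that are the actual reason the bootstrap $\|\nabla\varepsilon_n\|_{L^2}\le 1$ is needed.
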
 

\begin{proof} 
In the calculations that follow, we drop the index $n$. 
Moreover, we let $k= k_1$, where $k_1$ is given by~\eqref{fHYP1}. 
In addition, we make formal calculations, which can be justified for instance by the method of~\cite{Ozawa}.
It is convenient to set 
\begin{equation} \label{fDFG1} 
g (z )= |z|^\alpha z
\end{equation} 
for all $z\in \C$. 
We note that
\begin{align} 
\partial _z g (z) &= \frac {\alpha +2} {2} |z|^\alpha \label{fDFG2} \\
\partial _{ \overline{z} } g (z) &= \frac {\alpha } {2} |z|^{\alpha -2} z^2 \label{fDFG3}
\end{align} 
for all $z\in \C$. 
We will also use the estimates
\begin{gather} 
 | \partial _{ {z} } g( u+v ) - \partial _z g(u) - \partial _z g( v ) | \lesssim ( |u|^{\alpha -1} |v | + |v |^{\alpha -1} |u|) 
 \label{fPE1} \\
 | \partial _{ \overline{z} } g( u+v ) - \partial _{ \overline{z} } g(u) - \partial _{ \overline{z} } g( v ) | \lesssim ( |u|^{\alpha -1} |v | + |v |^{\alpha -1} |u|) \label{fPE2} 
\end{gather} 
for all $u,v\in \C$.
We establish~\eqref{fPE2}, the proof of~\eqref{fPE1} being similar. 
To prove~\eqref{fPE2}, we consider the three cases: 
$ |u| \ge 2 |v| ; \frac {1} {2} |v| \le |u| \le 2 |v| ; |u|\le \frac {1} {2} |v|$. 
The second case is immediate, because then $ |u|$ and $ |v|$ are equivalent.
Next, the first and third cases are equivalent, because the expressions on the right-hand side of~\eqref{fPE2} are symmetric in $u,v$. Therefore, we consider only the first case and, assuming without loss of generality $u\not = 0$, we have
\begin{equation*} 
\frac {2} {\alpha } [ \partial _{ \overline{z} } g( u+v ) - \partial _{ \overline{z} } g(u) - \partial _{ \overline{z} } g( v ) ] 
= | u+v |^{\alpha -2} (u+ v)^2 - |u|^{\alpha -2} u^2 - |v|^{\alpha -2} v^2 .
\end{equation*} 
Since $ | \, |v|^{\alpha -2} v^2 |= |v|^{\alpha }\le |u|^{\alpha -1} |v|$ (recall that $\alpha \ge 1$), we need only estimate $ | | u+v |^{\alpha -2} (u+ v)^2 - |u|^{\alpha -2} u^2 | $. We have
\begin{equation*} 
 | | u+v |^{\alpha -2} (u+ v)^2 - |u|^{\alpha -2} u^2 | = |u|^\alpha | | h(z) | 
\end{equation*} 
where
\begin{equation*} 
h(z) = | 1+ z |^{\alpha -2} (1 + z)^2 - 1,\quad z= \frac {v} {u} .
\end{equation*} 
The function $h $ is $C^1$ on $\{ z\in \R^2;\, |z|\le \frac {1} {2} \}$ and $h(0)= 0$, so that there exists a constant $C$ such that $ |h(z)| \le C |z|$ for $ |z|\le \frac {1} {2}$; and so
\begin{equation*} 
 | | u+v |^{\alpha -2} (u+ v)^2 - |u|^{\alpha -2} u^2 | \le C |u|^\alpha \frac { |v|} { |u|}= C |u|^{\alpha -1} |v|
\end{equation*} 
which proves the desired estimate. 

The equation for $\varepsilon = \varepsilon _n$ is, with the notation~\eqref{fDFG1}
\begin{equation} \label{eEFP1} 
\begin{cases} 
\varepsilon_t = i \Delta \varepsilon + [g (U+\varepsilon )- g (U) ] + i \Delta U \\
\varepsilon ( T_n ) =0 .
\end{cases} 
\end{equation} 
Multiplying by $ \overline{\varepsilon } $ and taking the real part, we obtain after integration by parts
\begin{equation*} 
\frac {1} {2} \frac {d} {dt} \| \varepsilon \| _{ L^2 }^2 = \Re \int [ g (U+\varepsilon )- g(U) ] \overline{\varepsilon } + \Re \int i (\Delta U ) \overline{\varepsilon } 
\end{equation*} 
By~\eqref{fEE1}, the first term on the right-hand side is nonnegative, so that
\begin{equation*} 
 \frac {d} {dt} \| \varepsilon \| _{ L^2 } \ge - \| \Delta U \| _{ L^2 } 
\end{equation*} 
hence, using~\eqref{fEU1:3}, 
\begin{equation} \label{fGCA2} 
 \frac {d} {dt} \| \varepsilon \| _{ L^2 } \ge - C (-t)^{-\frac {1} {\alpha }- \frac {4-N} {2 k} } 
\end{equation} 
for $-1 \le t<0$.
We observe that by~\eqref{fCT1} and~\eqref{fHYP1} 
\begin{equation*} 
0< \frac {1} {\alpha } + \frac {4-N} {2k} \le \frac {1} {2 }+ \frac {3 } {2k} < 1 
\end{equation*} 
so that integrating~\eqref{fGCA2} on $(t, T_n)$ yields
\begin{equation} \label{fELD1} 
 \| \varepsilon \| _{ L^2 } \le C (T_n -t)^{ 1 -\frac {1} {\alpha }- \frac {4 - N} {2 k} }
 \le C (T_n -t)^{ 1 -\frac {1} {\alpha }- \frac {3} {2 k} }, \quad -1 \le t\le T_n.
\end{equation} 
We now multiply~\eqref{eEFP1} by $-\Delta \overline{\varepsilon } $, take the real part and integrate by parts. Since 
\begin{align*} 
\nabla g (U+ \varepsilon ) & = \partial _z g (U+\varepsilon ) (\nabla U + \nabla \varepsilon ) + \partial _{ \overline{z} } g (U+\varepsilon ) (\nabla U + \nabla \overline{\varepsilon} ) \\
\nabla g(U) & = \partial _z g (U ) \nabla U + \partial _{ \overline{z} } g (U ) \nabla U 
\end{align*} 
we obtain
\begin{equation} \label{fGCA3} 
\begin{split} 
\frac {1} {2} \frac {d} {dt} \| \nabla \varepsilon \| _{ L^2 }^2 & = 
\Re \int \partial _z g( U+\varepsilon ) (\nabla U\cdot \nabla \overline{\varepsilon } + |\nabla \varepsilon |^2 )
\\ 
& + \Re \int \partial _{ \overline{z} } g( U+\varepsilon ) (\nabla U\cdot \nabla \overline{\varepsilon } + (\nabla \overline{\varepsilon} )^2 ) 
\\
& - \Re \int \partial _z g( U ) \nabla U\cdot \nabla \overline{\varepsilon }
- \Re \int \partial _{ \overline{z} } g( U ) \nabla U\cdot \nabla \overline{\varepsilon } 
\\ 
& - \Re \int i \nabla \Delta U \cdot \nabla \overline{\varepsilon }
\end{split} 
\end{equation} 
It follows from~\eqref{fDFG2}-\eqref{fDFG3} that
\begin{equation*} 
\begin{split} 
\Re [ \partial _z g( U+\varepsilon ) & |\nabla \varepsilon |^2 + \partial _{ \overline{z} } g( U+\varepsilon ) (\nabla \overline{\varepsilon} )^2 ] \\ &= \frac {\alpha +2} {2} |U+\varepsilon |^\alpha | \nabla \varepsilon |^2
 + \frac {\alpha } {2} |U+\varepsilon |^{\alpha -2} \Re [ (U+\varepsilon )^2 (\nabla \overline{\varepsilon } )^2] 
\\ & \ge |U+\varepsilon |^\alpha | \nabla \varepsilon |^2
\end{split} 
\end{equation*} 
so that~\eqref{fGCA3} yields
\begin{equation} \label{fGCA4} 
\begin{split} 
\frac {1} {2} \frac {d} {dt} \| \nabla \varepsilon \| _{ L^2 }^2 & \ge 
 \int |U+\varepsilon |^\alpha | \nabla \varepsilon |^2
\\ 
& + \Re \int [ \partial _{ {z} } g( U+\varepsilon ) - \partial _z g(U) - \partial _z g( \varepsilon ) ] \nabla U\cdot \nabla \overline{\varepsilon } 
\\
& + \Re \int [ \partial _{ \overline{z} } g( U +\varepsilon ) - \partial _{ \overline{z} } g( U ) - \partial _{ \overline{z} } g( \varepsilon ) ] \nabla U\cdot \nabla \overline{\varepsilon } 
\\ 
& + \Re \int [ \partial _{ {z} } g( \varepsilon ) - \partial _{ \overline{z} } g( \varepsilon ) ] \nabla U\cdot \nabla \overline{\varepsilon } - \Re \int i \nabla \Delta U \cdot \nabla \overline{\varepsilon } \\
& \Eqdef I_1 + I_2 + I_3 + I_4 + I_5 
\end{split} 
\end{equation} 
Applying~\eqref{fEU1:4}, we have
\begin{equation} \label{fGCA5} 
 | I_5 | \le \| \nabla \Delta U \| _{ L^2 } \| \nabla \varepsilon \| _{ L^2 }
 \lesssim ( -t)^{-\frac {1} {\alpha }- \frac {5} {2k} } \| \nabla \varepsilon \| _{ L^2 } . 
\end{equation} 
In view of~\eqref{fPE1}-\eqref{fPE2}, we obtain 
\begin{equation} \label{fGCA6:b1} 
\begin{split} 
 |I_2 + I_3| & \lesssim \int [ U^{\alpha -1} |\varepsilon | + |\varepsilon |^{\alpha -1} U] |\nabla U|\, |\nabla \varepsilon | \\ & \lesssim [ \|\varepsilon \| _{ L^2 } \|U \| _{ L^\infty }^{\alpha -1}+ \|\varepsilon \| _{ L^{2\alpha -2} }^{\alpha -1} \| U\| _{ L^\infty } ] \|\nabla \varepsilon \| _{ L^2 } \| \nabla U \| _{ L^\infty } 
\end{split} 
\end{equation} 
By Gagliardo-Nirenberg's inequality
\begin{equation} \label{fGCA10:b1} 
 \| \varepsilon \| _{ L^{2\alpha -2 }}^{\alpha -1} \lesssim \| \nabla \varepsilon \| _{ L^2 }^{\frac {N} {2} (\alpha -2)} \| \varepsilon \| _{ L^2 }^{ \frac {2(N -1)- \alpha (N-2)} {2} },
\end{equation} 
so that~\eqref{fGCA6:b1}, \eqref{fEU1:1}, \eqref{fEU1:2} and~\eqref{fELD1} yield
\begin{equation} \label{fGCA6} 
\begin{split} 
 |I_2 + I_3| \lesssim & \|U \| _{ L^\infty }^{\alpha -1} \| \nabla U \| _{ L^\infty } \|\varepsilon \| _{ L^2 } \|\nabla \varepsilon \| _{ L^2 } 
 \\ & + \| U\| _{ L^\infty } \| \nabla U \| _{ L^\infty } \| \varepsilon \| _{ L^2 }^{ \frac {2(N -1)- \alpha (N-2)} {2} } \| \nabla \varepsilon \| _{ L^2 }^{ 1 + \frac {N} {2} (\alpha -2)} 
 \\ \lesssim &
 (T_n - t ) ^{- \frac {1} {\alpha } - \frac {5} { 2 k }} \|\nabla \varepsilon \| _{ L^2 } 
 + (T_n - t ) ^{ \sigma _1 } \| \nabla \varepsilon \| _{ L^2 }^{ 1 + \frac {N} {2} (\alpha -2)} 
\end{split} 
\end{equation} 
where
\begin{equation} \label{fDSI1} 
\sigma _1 = - \frac {2} {\alpha } - \frac {1} {k} + \Bigl( 1 -\frac {1} {\alpha }- \frac {3} {2 k} \Bigr) \Bigl( \frac {2(N -1)- \alpha (N-2)} {2} \Bigr) .
\end{equation} 
Next,
\begin{equation} \label{fGCA7} 
\begin{split} 
 |I_4| & \lesssim \int |\varepsilon |^\alpha |\nabla \varepsilon | \, |\nabla U| \lesssim
 \int |\varepsilon | \, |\varepsilon |^{\alpha - 1} |\nabla \varepsilon | \, |\nabla U| \\
 & \lesssim \int |\varepsilon | [ | U + \varepsilon |^{\alpha -1} + U^{\alpha -1} ] |\nabla \varepsilon | \, |\nabla U| \\
 & \lesssim \|\varepsilon \| _{ L^2 } \|U \| _{ L^\infty }^{\alpha -1} \|\nabla \varepsilon \| _{ L^2 } \| \nabla U \| _{ L^\infty } + \int |\varepsilon |\, | U + \varepsilon |^{\alpha -1} |\nabla \varepsilon | \, |\nabla U| 
\end{split} 
\end{equation} 
The first term in the right-hand side of~\eqref{fGCA7} appears in~\eqref{fGCA6:b1} and is estimated by the right-hand side of~\eqref{fGCA6}, so we estimate the last term in~\eqref{fGCA7}. 
By Cauchy-Schwarz
\begin{equation*} 
 \int |\varepsilon |\, | U + \varepsilon |^{\alpha -1} |\nabla \varepsilon | \, |\nabla U| \le \Bigl( \int |U+\varepsilon |^\alpha |\nabla \varepsilon |^2 \Bigr)^{\frac {1} {2}} \Bigl( \int |\varepsilon |^2 |U+\varepsilon |^{\alpha -2} |\nabla U |^2 \Bigr)^{\frac {1} {2}}
\end{equation*} 
so that 
\begin{equation} \label{fGCA8} 
 \int |\varepsilon |\, | U + \varepsilon |^{\alpha -1} |\nabla \varepsilon | \, |\nabla U| \le \delta \int |U+\varepsilon |^\alpha |\nabla \varepsilon |^2+ \frac {1} {4\delta }\int |\varepsilon |^2 |U+\varepsilon |^{\alpha -2} |\nabla U |^2 
\end{equation} 
for every $\delta >0$.
Since $\alpha \ge 2$, we have $ |U+\varepsilon |^{\alpha -2} \lesssim |U |^{\alpha -2}+ |\varepsilon |^{\alpha -2} $, hence
\begin{equation} \label{fGCA9} 
\begin{split} 
\int |\varepsilon |^2 |U+\varepsilon |^{\alpha -2} |\nabla U |^2 & \lesssim \int |\varepsilon |^2 |U |^{\alpha -2} |\nabla U |^2 + \int |\varepsilon |^\alpha |\nabla U |^2 \\ & \lesssim \| \varepsilon \| _{ L^2 }^2 \| U\| _{ L^\infty }^{\alpha -2} \| \nabla U\| _{ L^\infty }^2 + \| \varepsilon \| _{ L^\alpha }^\alpha \| \nabla U\| _{ L^\infty }^2 .
\end{split} 
\end{equation} 
By Gagliardo-Nirenberg's inequality
\begin{equation} \label{fGCA10} 
 \| \varepsilon \| _{ L^\alpha }^\alpha \lesssim \| \nabla \varepsilon \| _{ L^2 }^{\frac {N} {2} (\alpha -2)} \| \varepsilon \| _{ L^2 }^{ \frac {2N- \alpha (N-2)} {2} }
\end{equation} 
thus we deduce from~\eqref{fGCA9}, \eqref{fEU1:1}, \eqref{fEU1:2} and~\eqref{fELD1} that
\begin{equation} \label{fGCA9:b1} 
\int |\varepsilon |^2 |U+\varepsilon |^{\alpha -2} |\nabla U |^2 \lesssim 
(T_n -t)^{ 1- \frac {2} {\alpha } - \frac {5} {k } } + 
(T_n -t)^{ \sigma _2 }
 \| \nabla \varepsilon \| _{ L^2 }^{\frac {N} {2} (\alpha -2)} 
\end{equation} 
where
\begin{equation} \label{fDFS2} 
\sigma _2 = -\frac {2} {\alpha } - \frac {2} {k} + \Bigl( 1 -\frac {1} {\alpha }- \frac {3} {2 k} \Bigr) \Bigl( \frac {2N- \alpha (N-2)} {2} \Bigr) 
\end{equation} 
We deduce from~\eqref{fGCA6}, \eqref{fGCA7}, \eqref{fGCA8} and~\eqref{fGCA9:b1} that 
\begin{equation*} 
\begin{split} 
 |I_2 + I_3 + I_4| & - \int |U+\varepsilon |^\alpha |\nabla \varepsilon |^2 
\\ \lesssim (T_n - t ) ^{- \frac {1} {\alpha } - \frac {5} { 2 k }} & \|\nabla \varepsilon \| _{ L^2 } 
 + (T_n - t ) ^{ \sigma _1 } \| \nabla \varepsilon \| _{ L^2 }^{ 1 + \frac {N} {2} (\alpha -2)} 
 \\& + (T_n -t)^{ 1- \frac {2} {\alpha } - \frac {5} {k } } + 
(T_n -t)^{ \sigma _2 }
 \| \nabla \varepsilon \| _{ L^2 }^{\frac {N} {2} (\alpha -2)} 
\end{split} 
\end{equation*} 
hence~\eqref{fGCA4} and~\eqref{fGCA5} yield
\begin{equation} \label{fGCA13} 
\begin{split} 
- \frac {d} {dt} \| \nabla \varepsilon \| _{ L^2 }^2 \lesssim &
(T_n - t ) ^{- \frac {1} {\alpha } - \frac {5} { 2 k }} \|\nabla \varepsilon \| _{ L^2 } 
 + (T_n - t ) ^{ \sigma _1 } \| \nabla \varepsilon \| _{ L^2 }^{ 1 + \frac {N} {2} (\alpha -2)} 
 \\& + (T_n -t)^{ 1- \frac {2} {\alpha } - \frac {5} {k } } + 
(T_n -t)^{ \sigma _2 }
 \| \nabla \varepsilon \| _{ L^2 }^{\frac {N} {2} (\alpha -2)} 
\end{split} 
\end{equation} 
We note that~\eqref{fCT1} and~\eqref{fHYP1} imply
\begin{equation*} 
1 - \frac {1} {\alpha } - \frac {3} {2k} \ge \frac {1} {3} 
\end{equation*} 
and one deduces easily that
\begin{equation*} 
\rho \Eqdef 1+ \min \{ \sigma _1, \sigma _2 \} > 0 .
\end{equation*} 
Moreover,
\begin{equation*} 
- \frac {1} {\alpha } - \frac {5} { 2 k } \ge - \frac {3} {4}, \quad 1- \frac {2} {\alpha } - \frac {5} {k }\ge - \frac {1} {2}
\end{equation*} 
and it follows from~\eqref{fGCA13} that
\begin{equation} \label{fGCA14} 
- \frac {d} {dt} \| \nabla \varepsilon \| _{ L^2 }^2 \lesssim 
 (T_n - t ) ^{ -1 + \widetilde{\rho} } \Bigl( 1 + \| \nabla \varepsilon \| _{ L^2 }^{ 1 + \frac {N} {2} (\alpha -2)} \Bigr)
\end{equation} 
where 
\begin{equation*} 
 \widetilde{\rho } = \min \Bigl\{ \frac {1} {4}, \rho \Bigr\}. 
\end{equation*} 
We now set
\begin{equation} \label{fGCA15} 
\tau _n = \inf \{ t\in [ - 1, T_n]; \, \| \nabla \varepsilon (t)\| _{ L^2 } \le 1 \}.
\end{equation} 
Since $\varepsilon (T_n)= 0$, we have $- 1 \le \tau _n < T_n$, and it follows from~\eqref{fGCA14} that there exists a constant $C$ independent of $n$ such that
\begin{equation} \label{fGCA16} 
 \| \nabla \varepsilon \| _{ L^2 } \le C (T_n - t ) ^{ \frac { \widetilde{\rho} } {2} } 
\end{equation} 
for all $\tau _n \le t\le T_n$. 
This implies that there exists $\delta >0$ such that $\tau _n \le T_n -\delta $. 
This completes the proof.
\end{proof} 

\section{Proof of Theorem~$\ref{eThm1}$} \label{sCOMP} 

We consider the solution $u_n$ of equation~\eqref{NLS1} defined by~\eqref{fDFTN} and~\eqref{fGCA0}, $\varepsilon _n$ defined by\eqref{fGCA1}, and we set
\begin{gather*} 
V_n (t)= U( T_n -t ) \\
\eta _n (t)= \varepsilon _n ( T_n -t )
\end{gather*} 
for $t \ge 0$. 
It follows from~\eqref{feME1:1} that there exist $\delta , C >0$ such that
\begin{equation} \label{fEEN18} 
 \| \eta_n (t) \| _{ H^1 } \le C t ^{ \mu } ,\quad 0\le t\le \delta . 
\end{equation} 
Moreover, it follows from~\eqref{eEFP1} that
\begin{equation} \label{NLS5} 
\partial _t \eta _n = - i \Delta \eta_n - [g( V_n+ \eta_n ) - g(V_n) ] - i\Delta V_n 
\end{equation} 
Using the estimate $ | g(u+ v) - g(u) | \lesssim ( |u|^\alpha + |v|^\alpha ) |v| $ and the 
 embeddings $H^1 (\R^N ) \hookrightarrow L^{\alpha +2} (\R^N ) $, $L^{\frac {\alpha +2} {\alpha +1}} ( \R^N ) \hookrightarrow H^{-1} ( \R^N ) $, we deduce that
\begin{equation*} 
 \| \partial _t \eta _n \| _{ H^{-1} } \lesssim \| \eta_n \| _{ H^1 } + \| V_n \| _{ H^1 }^\alpha \| \eta_n \| _{ H^1 } + \| \eta_n \| _{ H^1 }^{\alpha +1} + \| \Delta V_n \| _{ L^2 }
\end{equation*} 
so that, applying~\eqref{fEEN18}, \eqref{fEU1:1:b1}, \eqref{fEU1:2:b1} and~\eqref{fEU1:3}, there exists $\kappa >0$ such that
\begin{equation} \label{fEEN19} 
 \| \partial _t \eta _n \| _{ H^{-1} } \le C t^{ - \kappa } ,\quad 0< t\le \delta .
\end{equation} 
Given $\tau \in (0, \delta )$, it follows from~\eqref{fEEN18} and~\eqref{fEEN19} that the sequence $(\eta _n) _{ n\ge 1 }$ is bounded in $L^\infty ((\tau , \delta ), H^1 (\R^N ) ) \cap W^{1, \infty } ((\tau ,\delta ), H^{-1} (\R^N ) )$. 
Therefore, after possibly extracting a subsequence, there exists $\eta \in L^\infty ((\tau ,\delta ), H^1 (\R^N ) ) \cap W^{1, \infty } ((\tau ,\delta ), H^{-1} (\R^N ) )$ such that
\begin{gather} 
\eta_n \goto _{ n\to \infty } \eta \text{ in } L^\infty ((\tau , \delta ), H^1 (\R^N ) ) \text{ weak$^\star$} 
\label{fEEN21} \\
\partial _t \eta_n \goto _{ n\to \infty } \partial _t \eta \text{ in } L^\infty ((\tau ,\delta ), H^{-1} (\R^N ) ) \text{ weak$^\star$} \label{fEEN22} \\
\eta_n (t) \goto _{ n\to \infty } \eta (t) \text{ weakly in $H^1 (\R^N ) $ and a.e. on $\R^N $, for all $\tau \le t\le \delta $} \label{fEEN23} 
\end{gather} 
Since $\tau \in (0, \delta )$ is arbitrary, a standard argument of diagonal extraction shows that there exists $\eta \in L^\infty _\Loc ((0 , \delta ), H^1 (\R^N ) ) \cap W^{1, \infty } _\Loc ((0 ,\delta ), H^{-1} (\R^N ) )$ such that (after extraction of a subsequence) \eqref{fEEN21}, \eqref{fEEN22} and~\eqref{fEEN23} hold for all $0<\tau <\delta $. 
Moreover, \eqref{fEEN18} and~\eqref{fEEN23} imply that
\begin{equation} \label{fEEN24} 
 \| \eta (t) \| _{ H^1 } \le C t ^{ \mu } ,\quad 0< t < \delta 
\end{equation} 
and \eqref{fEEN19} and~\eqref{fEEN22} imply that
\begin{equation} \label{fEEN24:b1} 
 \| \partial _t \eta \| _{ L^\infty ((\tau ,\delta ), H^{-1} )} \le C \tau ^{ -\kappa } 
\end{equation} 
for all $0 <\tau <\delta $. 
In addition, it follows easily from~\eqref{NLS5} and the convergence properties~\eqref{fEEN21}--\eqref{fEEN23} that 
\begin{equation} \label{NLS6} 
\partial _t \eta + i \Delta \eta = - [g( V + \eta ) - g(V) ] - i\Delta V
\end{equation} 
in $L^\infty _\Loc ((0 , \delta ), H^{-1} (\R^N ) )$, where $V(t) \equiv U( -t )$.
Therefore, setting 
\begin{equation} \label{fEEN25} 
 u(t)= U(t) + \eta (-t) \quad - \delta < t < 0 
\end{equation} 
we see that $u \in L^\infty _\Loc (( - \delta , 0 ), H^1 (\R^N ) ) \cap W^{1, \infty } _\Loc (( - \delta , 0 ), H^{-1} (\R^N ) )$ and that 
\begin{equation} \label{fEEN26:b1} 
iu_t + \Delta u= i g(u)
\end{equation} 
in $L^\infty _\Loc (( - \delta , 0 ), H^{-1} (\R^N ) )$. 
We now claim that 
\begin{equation} \label{fEEN27:b1} 
u\in C( (-\delta , 0), H^1 (\R^N ) ) \cap C^1 ( (-\delta , 0), H^{-1} (\R^N ) ) .
\end{equation} 
Indeed, let $0<\varepsilon < \delta $, and we consider the solution $u_1 \in C((-\infty , -\varepsilon ), H^1 (\R^N ) ) \cap C^1 ((-\infty , -\varepsilon ), H^{-1} (\R^N ) ) $ of~\eqref{NLS1} such that 
\begin{equation} \label{fUNQ1} 
u_1 (-\varepsilon ) = u (-\varepsilon ). 
\end{equation} 
(See Proposition~\ref{eRem3:1}.) 
The uniqueness property of Proposition~\ref{eRem3:1} implies that $u= u_1$ on $(-\delta , -\varepsilon )$.
In particular, $u \in C((-\delta , -\varepsilon ), H^1 (\R^N ) ) \cap C^1 ((-\delta , -\varepsilon ), H^{-1} (\R^N ) ) $, hence~\eqref{fEEN27:b1} follows, since $\varepsilon $ is arbitrary.
We may now extend $u$ for $t\le -\delta $ (see Proposition~\ref{eRem3:1}) to a solution $u\in C((-\infty , 0), H^1 (\R^N ) ) \cap C^1 ((-\infty , 0), H^{-1} (\R^N ) ) $ of~\eqref{NLS1}. 
Estimates~\eqref{fEEN26}, \eqref{fEEN27} and~\eqref{fEEN29} now follow from~\eqref{fEEN24}, \eqref{fEU1:1:b1} and~\eqref{fEU1:2:b1}, respectively, and the convergence property~\eqref{fEEN30} follows from~\eqref{fEEN26}.

\end{document}